\newtheorem{theorem}{Theorem}
\newtheorem{lemma}[theorem]{Lemma}
\numberwithin{equation}{section}
\newtheorem{corollary}[theorem]{Corollary}
\begin{document}
\title[Distinguishing homomorphisms]{Distinguishing homomorphisms of
infinite graphs}
\author{Anthony Bonato}
\address{Department of Mathematics\\
Ryerson University\\
Toronto, ON\\
Canada, M5B 2K3} \email{abonato@ryerson.ca}
\author{Dejan Deli\'{c}}
\address{Department of Mathematics\\
Ryerson University\\
Toronto, ON\\
Canada, M5B 2K3} \email{ddelic@ryerson.ca}
\subjclass{05C15, 05C60, 05C63}
\keywords{distinguishing chromatic number, graph homomorphism, uniquely $H$-colourable graph, pseudo-homogeneous graph}
\thanks{Supported by grants from NSERC, Mprime, and Ryerson}

\begin{abstract}
We supply an upper bound on the distinguishing chromatic number of certain infinite graphs satisfying
an adjacency property. Distinguishing proper $n$-colourings are generalized to the new notion of
distinguishing homomorphisms. We prove that if a graph $G$ satisfies the connected existentially closed property and admits a homomorphism to $H$, then it admits continuum-many distinguishing homomorphisms from $G$ to $H$ join $K_2.$ Applications
are given to a family universal $H$-colourable graphs, for $H$ a finite core.
\end{abstract}

\maketitle

\section{Introduction}

The distinguishing number is a widely studied graph parameter, first
introduced by Albertson and Collins \cite{ac}. Given a graph $G,$ its \emph{%
distinguishing number}, written $D(G),$ is the least positive integer $n$
such that there exists an $n$-colouring of $V(G)$ (not necessarily proper)
so that no non-trivial automorphism preserves the colours. The \emph{%
distinguishing chromatic number}, written $\chi _{D}$, is a variant of the
distinguishing number which requires that the $n$-colouring be proper (so the
set of vertices of a given colour forms an independent set). The
distinguishing chromatic number was introduced by Collins and Trenk~\cite{ct}
in 2006 (see also \cite{cht,ls}).

The distinguishing number of infinite graphs was first considered in \cite%
{imrich}. In particular, it was proved there that the distinguishing number
of the \emph{infinite random} (or \emph{Rado}) \emph{graph}, written $R,$ is
$2.$ (See \cite{cam} for background on $R.)$ This result was generalized first in
\cite{laflamme} and then in \cite{bd}; in the latter paper it was shown that graphs
satisfying a certain adjacency property have distinguishing number $2.$ As
the chromatic number of $R$ and many of its relatives (such as the Henson
universal homogeneous $K_{n}$-free graphs) are infinite, their
distinguishing chromatic numbers are also infinite. We find bounds on the
distinguishing chromatic numbers of certain infinite, symmetric graphs of bounded
chromatic number: the universal pseudo-homogeneous $H$-colourable graphs, where $H$ is
a finite core graph (see \cite{bonato0,mms}). This family
will be discussed in detail in Section~\ref{mainsec}.

We prove our results in the new and general setting of distinguishing homomorphisms (defined
in the next section). Distinguishing homomorphisms generalize distinguishing proper colourings,
and some of their properties are outlined in Lemma~\ref{lemma} in Section~2.
 Our main result is Theorem \ref{main}, which
demonstrates that for a graph $G$ satisfying a certain adjacency property (called c.e.c.)~which admits
a homomorphism to $H$,
there are continuum-many distinct distinguishing homomorphisms from $G$ to $H$ join $K_{2}.$ In
particular, for such graphs we derive the bound $\chi _{D}(G)\leq \chi (G)+2.$ We apply this
result to the universal pseudo-homogeneous $H$-colourable graphs.

Throughout, all graphs we consider are undirected, simple, and countable (that is, either finite or countably infinite). For background on graph theory, the reader is directed to \cite{diestel,west}. The cardinality of the continuum (that is, the set of real numbers) is denoted by $2^{\aleph_0}.$ For a function $f:X\rightarrow Y$ and $S\subseteq X$, we use the notation $f\upharpoonright S$ for the restriction of $f$ to $S.$ We use the notation $1_X$ for the identity function on $X.$ If $G$ is a graph, then its automorphism group is denoted $\mathrm{Aut}(G)$.

\section{Distinguishing homomorphisms}

The chromatic distinguishing number is defined in terms of proper $n$%
-colourings which are \emph{distinguishing}: no non-trivial automorphism
preserves the colours. A proper $n$-colouring may be viewed as a
homomorphism into $K_{n}$, which allows us to generalize this notion to
the setting of graph homomorphisms.

Fix a finite graph $H.$ For a graph $G$, a \emph{homomorphism} from $G$ to $H$ is a mapping $%
f:V(G)\rightarrow V(H)$ such that $xy\in E(G)$ implies that $f(x)f(y)\in
E(H).$ We abuse notation and write $f:G\rightarrow H,$ or even $G\rightarrow
H$ if the mention of $f$ is not important. We say that $G$ is $H$-\emph{colourable}. For additional
background on graph homomorphisms, see \cite{hell}.

A \emph{distinguishing homomorphism} from $G$ to $H\ $is a homomorphism $%
f:G\rightarrow H$ so that for all $\alpha \in \mathrm{Aut}(G),$ if%
\begin{equation}
\alpha f^{-1}=f^{-1}  \label{moo}
\end{equation}%
then $\alpha =1.$ We write $G\overset{D}{\rightarrow }$ $H$ if there is some
distinguishing homomorphism from $G\ $to $H.$ If $f:G\rightarrow H$ is any homomorphism and $\alpha $ satisfies (\ref%
{moo}), then we say it is \emph{preserving relative to} $f$. Note that if $%
\alpha $ is preserving, then for $x\in V(H)$ it permutes the elements of $%
f^{-1}(x)$  (we may think of each independent set $f^{-1}(x)$ as the
vertices all of one colour). Hence, a distinguishing proper $n$%
-colouring is just a distinguishing homomorphism to $K_{n}.$ For an example,
see Figure~\ref{fig1}. Note that an injective homomorphism is necessarily distinguishing (in particular, we usually consider only the case when $f^{-1}$ is a relation). Hence, every homomorphism from a core graph (that is, a
graph with the property that every homomorphism from $H$ to itself is an
automorphism) to itself is distinguishing.
\begin{figure} [h!]
\begin{center}
\epsfig{figure=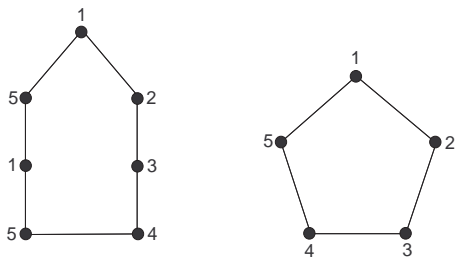,width=2.5in,height=1.5in}
\caption{A distinguishing homomorphism from $C_7$ to $C_5.$ The labels on $C_7$
describe the homomorphism.}\label{fig1}
\end{center}
\end{figure}

We prove the following lemma which collects some facts on distinguishing homomorphisms. A graph
$G$ is \emph{uniquely }$H$\emph{-colourable} if it is $H$-colourable, any
homomorphism from $G$ to $H$ is onto, and for two homomorphism $%
f,g:G\rightarrow H,$ there is an automorphism $\alpha \in \mathrm{Aut}(H)$
such that $f=\alpha g.$ For example, each core graph $H$ is uniquely $H$-colourable.  Note that a uniquely $K_{2}$-colourable
graph is precisely a connected bipartite graph.

\begin{lemma}\label{lemma}
\begin{enumerate}
\item For a fixed homomorphism $f:G\rightarrow H$, the preserving
automorphisms relative to $f$ form a subgroup of $\mathrm{Aut}(G).$

\item Distinguishing homomorphisms do not compose, in general.

\item If $f:G\rightarrow H$ is a homomorphism and $\beta \in \mathrm{Aut}(H),
$ then $f$ is distinguishing homomorphism if and only if $\beta f$ is
distinguishing homomorphism.

\item If $G$ is uniquely $H$-colourable, then either all or no homomorphisms
$f:G\rightarrow H$ are distinguishing.

\item Let $G_1$ and $G_2$ be connected, non-isomorphic graphs with disjoint vertex sets. If $f_1:G_1\rightarrow H$ and $f_2:G_2 \rightarrow H$
are distinguishing homomorphisms, then so is $f_1 \cup f_2: G_1 \cup G_2 \rightarrow H.$
\end{enumerate}
\end{lemma}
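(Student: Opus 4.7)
The overall plan is to tackle the five items in order: (1), (3), and (4) amount to direct unwindings of the definitions; (2) calls for an explicit counterexample; and (5) follows from a componentwise decomposition of automorphisms of a disjoint union. I expect the only step requiring any real choice is the construction in (2); the rest is routine bookkeeping.

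For (1), I would verify the three subgroup axioms directly from the defining equation $\alpha f^{-1}=f^{-1}$: the identity trivially preserves, the product $\alpha\beta$ preserves since $(\alpha\beta)f^{-1}=\alpha(\beta f^{-1})=\alpha f^{-1}=f^{-1}$, and $\alpha^{-1}$ preserves upon applying it to both sides of $\alpha f^{-1}=f^{-1}$. For (3) the key observation is $(\beta f)^{-1}(x)=f^{-1}(\beta^{-1}(x))$, so that the partition of $V(G)$ induced by $\beta f$ coincides with the one induced by $f$ up to relabelling of blocks by $\beta$. An automorphism $\alpha\in\mathrm{Aut}(G)$ therefore preserves one partition if and only if it preserves the other, which is exactly (3). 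Item (4) then follows at once from (3) and the definition of uniquely $H$-colourable: any two homomorphisms $f,g:G\to H$ differ by some $\beta\in\mathrm{Aut}(H)$, so (3) forces them to be distinguishing together.

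For (2), I would give a small explicit example. Take $G=2K_2$ with vertex set $\{a,b,c,d\}$ and edges $ab,cd$; let $H=P_4$ be the path with vertices $p,q,r,s$ and edges $pq,qr,rs$; and let $K=K_2$. Define $f:G\to H$ by $a\mapsto p$, $b\mapsto q$, $c\mapsto r$, $d\mapsto s$ (an injection, hence distinguishing), and $g:H\to K$ by the proper $2$-colouring $p,r\mapsto 0$, $q,s\mapsto 1$. The only nontrivial automorphism of $P_4$ is the endpoint reflection, which sends $\{p,r\}$ to $\{s,q\}$ and so preserves neither colour class setwise; hence $g$ is distinguishing. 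However, the colour classes of $gf$ are $\{a,c\}$ and $\{b,d\}$, and these are preserved setwise by the edge-swapping automorphism $(a\,c)(b\,d)$ of $G$, so $gf$ is not distinguishing. This is the main piece of work, and the small size of the example makes it easy to verify.

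For (5), I would first observe that any automorphism of $G_1\cup G_2$ permutes its connected components; since $G_1$ and $G_2$ are themselves connected and non-isomorphic, no automorphism can interchange them, so every $\alpha\in\mathrm{Aut}(G_1\cup G_2)$ fixes $V(G_1)$ and $V(G_2)$ setwise and restricts to an automorphism $\alpha|_{G_i}$ of each $G_i$. Writing $(f_1\cup f_2)^{-1}(x)=f_1^{-1}(x)\cup f_2^{-1}(x)$ with each $f_i^{-1}(x)\subseteq V(G_i)$, the preservation equation for $\alpha$ with respect to $f_1\cup f_2$ decouples into the two preservation equations $\alpha|_{G_i}f_i^{-1}=f_i^{-1}$. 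Since each $f_i$ is distinguishing, $\alpha|_{G_i}=1_{G_i}$, and hence $\alpha=1$, completing the proof.
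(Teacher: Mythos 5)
Your proposal is correct, and for items (1), (3), (4), and (5) it follows essentially the same argument as the paper: the direct subgroup verification, the observation that $(\beta f)^{-1}(x)=f^{-1}(\beta^{-1}x)$ so the colour classes of $f$ and $\beta f$ coincide as a family of sets, the deduction of (4) from (3) via the definition of unique $H$-colourability, and the componentwise decomposition of automorphisms of $G_1\cup G_2$ using connectedness and non-isomorphism. The only genuine divergence is in item (2): the paper exhibits its counterexample via a figure, composing a distinguishing homomorphism of a small graph onto $C_5$ with a distinguishing homomorphism $C_5\rightarrow K_3$ and asking the reader to verify that the composite fails to be distinguishing, whereas you build the chain $2K_2\rightarrow P_4\rightarrow K_2$ explicitly: the injection $a,b,c,d\mapsto p,q,r,s$ is distinguishing for free, the proper $2$-colouring of $P_4$ is distinguishing because the endpoint reflection exchanges the two colour classes, and the composite has colour classes $\{a,c\},\{b,d\}$ which the component-swapping automorphism $(a\,c)(b\,d)$ fixes setwise. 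I checked each of these claims and they hold, so your example is valid; its advantage over the paper's is that it is self-contained, minimal, and verifiable line by line without a picture, while the paper's example has the mild virtue of using connected graphs and tying in with its running $C_5$/$C_7$ illustrations.
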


\begin{proof}
For item (1), suppose that $\alpha _{1}$ and $\alpha _{2}$ are preserving
automorphisms of $G.$ Then we have that%
\begin{equation*}
\alpha _{1}\alpha _{2}f^{-1}=\alpha _{1}f^{-1}=f^{-1}.
\end{equation*}
It is clear that the identity $1$ is a preserving automorphism relative to $%
f.$ Further, note that $\alpha _{1}f^{-1}=f^{-1}$ implies that $\alpha
_{1}^{-1}f^{-1}=f^{-1},$ and so item (1) follows.

For (2), consider the graphs and homomorphisms displayed in Figure~2.
\begin{figure} [h!]
\begin{center}
\epsfig{figure=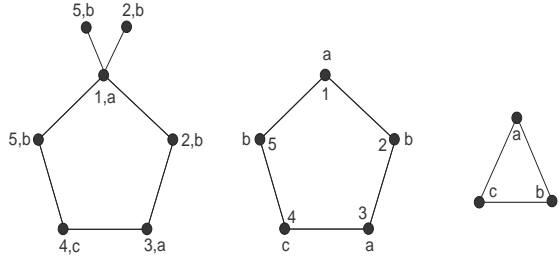, width=3in, height=1.5in}
\caption{Distinguishing homomorphisms which do not compose.}\label{fig2}
\end{center}
\end{figure}
The notation $i,j$ on vertices of the leftmost graph denotes two homomorphisms: the first number $i$ is a homomorphism from the leftmost graph to $C_5$ (which is distinguishing),
and the second letter $j$ is the composed
homomorphism to $K_3$. The reader can verify that the composition of these two distinguishing homomorphisms is not distinguishing.

For (3), suppose that $f$ is distinguishing (the reverse direction is
similar and so is omitted). Fix $\alpha \in \mathrm{Aut}(G).$ Suppose that
\begin{equation*}
\alpha \left( \beta f\right) ^{-1}=\left( \beta f\right) ^{-1}.
\end{equation*}
Then $\alpha f^{-1}\beta ^{-1}=f^{-1}\beta ^{-1}.$ Fix $x\in V(H).$ Then
there is a $y\in V(H)$ such that $\beta ^{-1}y=x.$ Hence, $\alpha
f^{-1}\beta ^{-1}(y)=f^{-1}\beta ^{-1}(y)$ implies that $\alpha
f^{-1}(x)=f^{-1}(x).$ As $x$ was arbitrary we have that $\alpha$ is preserving relative to $f,$ and so $\alpha =1$.

Item (4) follows immediately from (3). For item (5), suppose that
\begin{equation}\label{oo}
\alpha (f_1\cup f_2)^{-1}=(f_1\cup f_2)^{-1},
\end{equation}
for
$\alpha \in \mathrm{Aut}(G_1 \cup G_2).$ As $G_1$ and $G_2$ are
not isomorphic, connected, and have disjoint vertex sets, we must have that $\alpha_i=\alpha \upharpoonright G_i$ are automorphisms of $G_i,$ for $i=1,2.$ By (\ref{oo}), we have that $\alpha_if_i^{-1} = f_i^{-1},$ which implies for $i=1,2$ that $\alpha_i=1,$ and so $\alpha=1.$
\end{proof}

\section{Main results}\label{mainsec}

A graph satisfies the \emph{connected existentially closed} or \emph{c.e.c}.\
adjacency property if for all non-joined vertices $u$ and $v$ (which may be equal) and
finite sets of vertices $T$ not containing $u$ or $v,$ there is a path $P$
of length at least $2$ connecting $u$ and $v$ with the property that no
vertex of $P\backslash \{u,v\}$ is joined to a vertex of $T.$ (Note that if $%
u=v,$ then $P$ is a closed path connected to $u$ with at least one vertex not
equalling $u.)$ See Figure~\ref{fig3}.
\begin{figure} [h!]
\begin{center}
\epsfig{figure=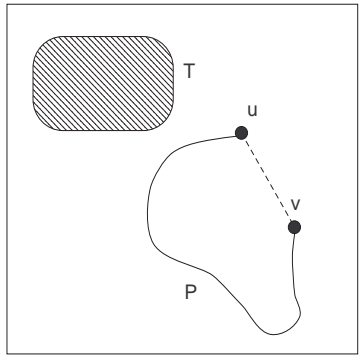,width=2in,height=2in}
\caption{The c.e.c.\ property.}\label{fig3}
\end{center}
\end{figure}
In particular, the \emph{internal vertices} of $P$ are distinct from and
not joined to a vertex of $T.$

The infinite random graph is c.e.c.\ as it is e.c. The infinite random bipartite graph is also c.e.c. To see this, note
that if $u$ and $v$ are the same colour, then they have infinitely many
common neighbours, and so they must have one outside $T$. This gives a path of
length $2$ connecting $u$ and $v$ with the desired properties. If $u$ and $v$
are different colours, then consider a neighbour $w_{1}$ of $u$ distinct
from $v$ and any element of $T.$ We may find a common neighbour $w_{2}$ of $v
$ and $w_1$ not equalling $u$ or a vertex of $T.$ Then the path $P=uw_{1}w_{2}v
$ has the desired properties.

We now state our main result. Given graphs $X$ and $Y,$ define their \emph{%
join}, written $X\vee Y,$ by adding all edges between disjoint copies of $X$
and $Y.$

\begin{theorem}
\label{main}If $G\rightarrow H$ and $G\ $is c.e.c., then there are $2^{\aleph_0}$ distinct
distinguishing homomorphisms from $G$ to $H\vee K_{2}.$
\end{theorem}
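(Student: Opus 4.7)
Write $V(H\vee K_2)=V(H)\cup\{a,b\}$ with $a\sim b$ and both $a$ and $b$ adjacent to every $x\in V(H)$, and regard the given $f\colon G\to H$ as taking values in $H\vee K_2$.  Enumerate $V(G)=\{v_n:n\in\omega\}$.  I shall build, for each $\sigma\in 2^{\omega}$, a distinct homomorphism $g_\sigma\colon G\to H\vee K_2$ and show that it is distinguishing.  The key structure is a system of gadgets produced by iterating c.e.c.: inductively, at stage $n$, if $v_n\notin W_0\cup\cdots\cup W_{n-1}$, construct a cycle $C_n$ of odd length $\ell_n=2n+3$ through $v_n$ with internal vertex set $W_n=\{u_{n,1},\dots,u_{n,2n+2}\}$ satisfying (i) $W_n\cap W_m=\emptyset$ for $n\neq m$; (ii) no $G$-edge joins $W_n$ to $W_m$ for $n\neq m$; (iii) $G[W_n]$ is an induced path (so $C_n$ is chord-free); and (iv) no vertex of $W_n$ is adjacent in $G$ to $v_m$ for any $m<n$.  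Conditions (i), (ii), and (iv) are secured by feeding the finite avoid-set $T_n:=\{v_0,\dots,v_{n-1}\}\cup W_0\cup\cdots\cup W_{n-1}$ into each invocation of c.e.c., while (iii) together with the prescribed length is arranged by a nested iteration of c.e.c.\ that starts with a first closed path at $v_n$ and successively splices in extra internal vertices at non-adjacent pairs, always enlarging $T$ by the current partial cycle.

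\emph{Coloring and distinguishing.}  Set $W:=\bigcup_n W_n$ and for $\sigma\in 2^{\omega}$ define
\[
g_\sigma(v) = \begin{cases} f(v), & v\notin W, \\ a, & v=u_{n,i}\text{ and }i+\sigma(n)\text{ is even}, \\ b, & v=u_{n,i}\text{ and }i+\sigma(n)\text{ is odd}. \end{cases}
\]
Adjacent internal vertices of any $W_n$ receive different $\{a,b\}$-colours, each processed $v_n$ carries $f(v_n)\in V(H)$ which is joined in $H\vee K_2$ to both $a$ and $b$, and (ii)--(iv) rule out cross-gadget clashes, so $g_\sigma$ is a homomorphism; distinct $\sigma$'s yield distinct $g_\sigma$'s since $g_\sigma(u_{n,1})$ encodes $\sigma(n)$.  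Suppose $\alpha\in\mathrm{Aut}(G)$ satisfies $g_\sigma\circ\alpha=g_\sigma$.  Since $\{a,b\}\cap V(H)=\emptyset$, $\alpha(W)=W$; by (ii) and (iii) the components of $G[W]$ are the induced paths $G[W_n]$ with distinct cardinalities $|W_n|=2n+2$, forcing $\alpha(W_n)=W_n$ for each $n$.  The only nontrivial automorphism of a path is reversal $u_{n,i}\mapsto u_{n,2n+3-i}$; because $2n+3$ is odd and the colours strictly alternate, reversal swaps $a\leftrightarrow b$ and is not colour-preserving, so $\alpha$ is the identity on every $W_n$, and hence on $W$.  For $v_n\in W$ this already gives $\alpha(v_n)=v_n$.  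If $v_n\notin W$ and $\alpha(v_n)=v_m$ with $m\neq n$, then $\alpha(C_n)$ is a cycle on $\{v_m\}\cup W_n$, so $v_m$ is adjacent in $G$ to both endpoints of $W_n$: this contradicts (iv) at stage $n$ when $m<n$, and (applying the same argument to $\alpha^{-1}$ and $C_m$, using that $\alpha^{-1}$ also preserves $g_\sigma$ and is therefore the identity on $W_m$) contradicts (iv) at stage $m$ when $m>n$.  Hence $\alpha=1_{V(G)}$, so each $g_\sigma$ is distinguishing and we obtain $2^{\aleph_0}$ distinct distinguishing homomorphisms.

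\emph{Main obstacle.}  The principal difficulty is the inductive gadget construction: from the single axiom c.e.c., one must simultaneously achieve a \emph{prescribed} odd cycle length $2n+3$, chord-freeness of $G[W_n]$, and the asymmetric non-adjacency condition (iv).  This demands a nested iteration of c.e.c.\ within each stage, continually enlarging the finite avoid-set $T$ to block every adjacency that would violate (ii), (iii), or (iv); closing off the partial path into a cycle of the exact required length is the most delicate moment and is handled by a final, carefully chosen application of c.e.c.\ to the two remaining free endpoints.  Once the gadgets are in hand, the two-case contradiction ($m<n$ versus $m>n$) exploiting the asymmetric role of (iv) pins down $\alpha$ to the identity in a clean way.
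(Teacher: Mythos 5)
Your overall strategy (make the preimage of the extra $K_2$ a rigid subgraph, and use vertices of that subgraph to separate all other vertices) is the same in spirit as the paper's, but the specific gadget you rely on cannot be built from the c.e.c.\ property, and this is a genuine gap rather than a routine technicality. Your construction requires, through each vertex $v_n$, a chordless cycle $C_n$ of \emph{odd} length $2n+3$. The c.e.c.\ axiom only provides, for two non-joined vertices, \emph{some} path of length at least $2$ avoiding a finite set; it gives no upper bound on the length of that path and, more importantly, no control whatsoever over its parity. Attaching a pendant path of an exact prescribed length is fine (one applies c.e.c.\ with $u=v$ repeatedly to add one new vertex at a time), but \emph{closing} a path into a cycle of prescribed length, or even of prescribed parity, is not something c.e.c.\ can do, and your ``splice in extra internal vertices at non-adjacent pairs'' step never changes the parity of the cycle in a bipartite graph. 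Indeed the paper itself points out that the infinite random bipartite graph is c.e.c.; that graph contains no odd cycles at all, so your gadgets $C_n$ simply do not exist there, and your coloured alternation argument (which needs $|W_n|=2n+2$ even, i.e.\ the cycle odd, so that reversal of the path swaps $a$ and $b$) collapses: with $|W_n|$ odd the alternating $\{a,b\}$-colouring of the path is invariant under reversal, and nothing in your setup rules such an automorphism out. Since the theorem is supposed to cover bipartite c.e.c.\ graphs, the proof as written fails.

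The parts of your argument that sit on top of the gadgets are sound and close to the paper's: a preserving automorphism fixes $W=g_\sigma^{-1}(\{a,b\})$ setwise, rigidity of $G[W]$ pins it pointwise, and the separation condition (your (iv), used asymmetrically via $\alpha$ and $\alpha^{-1}$) forces the identity on the complement; varying the colouring by $\sigma\in 2^\omega$ to get $2^{\aleph_0}$ maps is also a legitimate alternative to the paper's device of varying the structure itself. The fix is to replace cycles by a structure that only ever requires \emph{adding} pendant paths, never closing them: the paper uses a tree consisting of branches of pairwise distinct lengths attached to a single root (rigid because the branch lengths differ), built by a back-and-forth ``processing'' of all unordered pairs of vertices so that every pair of vertices outside the tree is separated by some tree vertex adjacent to exactly one of them. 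That construction needs only the exact-length pendant-path extension that c.e.c.\ genuinely provides, and it works uniformly for bipartite and non-bipartite $G$.
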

We defer the proof of Theorem~\ref{main} to Section~\ref{proofs}, and first focus on applications to certain infinite graphs with bounded chromatic number.

Let $H$ be a finite, non-trivial, connected graph.
As studied in \cite{bonato0} and later in \cite{mms}, there is a certain
class of countable universal graphs admitting a homomorphism into $H$; these are
defined in terms of uniquely $H$-colourable graphs. For each core graph $H,$
there is a uniquely $H$-colourable graph $M(H)$ which is unique up to isomorphism with the
following properties.

\begin{enumerate}
\item[(M1)] Each finite $H$-colourable graph is isomorphic to an induced
subgraph of $M(H).$

\item[(M2)] Each finite induced subgraph $X$ of $M(H)$ is contained in a finite
uniquely $H$-colourable subgraph $X^{\prime }$ of $M(H).$

\item[(M3)] If $X$ is a uniquely $H$-colourable induced subgraph of $M(H),$ and $X$
is an induced subgraph of a uniquely $H$-colourable graph $Y$, then there
is an isomorphic copy $Y^{\prime }$ of $Y$ in $M(H)$ and an isomorphism $%
\alpha :Y\rightarrow Y^{\prime }$ such that $\alpha \upharpoonright X=1_{X}.$
\end{enumerate}

Property (M3) is sometimes referred to as \emph{amalgamating }$Y$\emph{\ into
}$M(H)$\emph{\ over} $X,$ and it can be viewed as a certain kind of
adjacency property for $M(H).$ The graph $M(H)$ is sometimes called \emph{%
universal} \emph{pseudo-homogeneous }(since every isomorphism of finite uniquely $H$-colourable
induced subgraphs of $M(H)$ extends to an automorphism; for more on such graphs see Chapter~11
of Fra\"{\i}ss\'{e} \cite{fra}).

We note that each $H$-colourable graph is an induced subgraph of a uniquely $%
H$-colourable graph via the following construction. Assume $G$ and $H$ are
disjoint. Fix a homomorphism $f:G\rightarrow H$ and define $G(f)$ to be the
graph with vertices $V(G)\cup V(H)$ and edges:
\begin{equation*}
E(G)\cup E(H)\cup \{xy:x\in V(G),y\in V(H),f(x)y\in E(H)\}.
\end{equation*}
The graph $G(f)$ is the \emph{fixation} of $G$ by $f$ relative to $H;$ see
Figure~\ref{fig5}.
\begin{figure}[h!]
\begin{center}
\epsfig{figure=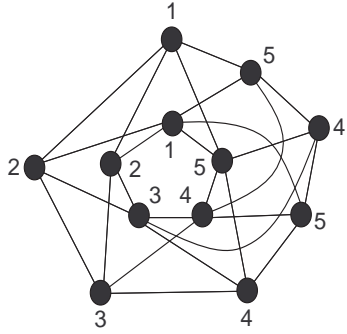,width=2in,height=2in}
\caption{A fixation of $C_7$ by $C_5.$ The $C_5$-colouring of
$C_7$ is shown as the labelling of the vertices of $C_7.$}\label{fig5}
\end{center}
\end{figure}
We restate the following result from \cite{bonato1}.

\begin{theorem}[\cite{bonato1}]\label{fixa}
Suppose that $H$ is a core graph, and if $f:G\rightarrow H$ is a homomorphism, then $G(f)$ is
uniquely $H$-colourable, and $f\cup 1_{G}:G(f)\rightarrow H$ is a
homomorphism.
\end{theorem}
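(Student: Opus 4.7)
The plan is to check two things: first, that the extension $\tilde f : G(f)\to H$ defined by $\tilde f \upharpoonright V(G) = f$ and $\tilde f \upharpoonright V(H) = 1_H$ is a homomorphism, and second, that $G(f)$ is uniquely $H$-colourable. The first part is immediate from the definition of $E(G(f))$: edges inside $V(G)$ are handled because $f$ is a homomorphism, edges inside $V(H)$ are preserved by the identity, and a crossing edge $xy$ with $x\in V(G)$ and $y\in V(H)$ lies in $E(G(f))$ precisely when $f(x)y\in E(H)$, which is the same as $\tilde f(x)\tilde f(y)\in E(H)$.

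For the uniquely $H$-colourable property, I first observe that $V(H)$ induces a copy of $H$ inside $G(f)$, since the construction adds no new edges inside $V(H)$. Hence for any homomorphism $g : G(f)\to H$, the restriction $g\upharpoonright V(H)$ is an endomorphism of $H$, and because $H$ is a core, it is some automorphism $\alpha\in\mathrm{Aut}(H)$. This already yields surjectivity of every homomorphism. For the remaining condition, given a second homomorphism $g'$ with $\beta = g'\upharpoonright V(H)\in\mathrm{Aut}(H)$, I would reduce to the claim that any homomorphism $h : G(f)\to H$ fixing $V(H)$ pointwise must equal $\tilde f$: applied to $\alpha^{-1} g$ and $\beta^{-1} g'$, this claim yields $g = (\alpha\beta^{-1})g'$, as required.

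The main obstacle is the following fact, which is the key place where the core hypothesis on $H$ is used: for every $v\in V(H)$, $v$ is the unique vertex of $H$ adjacent to every member of $N_H(v)$; equivalently, $N_H(v)\subseteq N_H(v')$ forces $v=v'$. To prove this, suppose $v\neq v'$ and $N_H(v)\subseteq N_H(v')$. Then $v\not\sim v'$ (otherwise $v'\in N_H(v)\subseteq N_H(v')$ would give a loop), and the map sending $v$ to $v'$ and fixing every other vertex is a well-defined homomorphism $H\to H$ (every edge $vw$ becomes $v'w$, and $w\in N_H(v)\subseteq N_H(v')$ guarantees $v'w\in E(H)$), but it is not injective, contradicting $H$ being a core.

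Granted this fact, fix a homomorphism $h : G(f)\to H$ with $h\upharpoonright V(H) = 1_H$ and any $x\in V(G)$. In $G(f)$, the $V(H)$-neighbours of $x$ are exactly $N_H(f(x))$, so $h(x)$ must be adjacent in $H$ to every element of $N_H(f(x))$. By the fact above, $h(x) = f(x)$, so $h = \tilde f$, completing the argument.
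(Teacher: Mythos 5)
Your argument is correct, but there is nothing in this paper to compare it against: Theorem~\ref{fixa} is only restated here from \cite{bonato1}, with no proof given, so you have supplied a proof where the paper defers to the earlier reference. Your verification that $f\cup 1_H$ is a homomorphism (note the paper's ``$f\cup 1_G$'' is evidently a typo for the identity on $V(H)$, and you read it the intended way) is the routine part. The substantive content is your reduction of unique $H$-colourability to the claim that a homomorphism $h:G(f)\rightarrow H$ fixing $V(H)$ pointwise equals $f\cup 1_H$, together with the key lemma that in a core no vertex's neighbourhood is contained in another's: this is the standard ``fold'' argument (collapsing $v$ onto $v'$ when $N_H(v)\subseteq N_H(v')$ gives a non-injective endomorphism), and it is exactly where the core hypothesis is needed. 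Since a core on at least two vertices has no dominated (in particular no isolated) vertices, the final step $N_H(f(x))\subseteq N_H(h(x))\Rightarrow h(x)=f(x)$ goes through with no edge cases, because the crossing edges of the fixation make the $V(H)$-neighbourhood of $x\in V(G)$ exactly $N_H(f(x))$. The use of the copy of $H$ inside $G(f)$ to get surjectivity and to normalize two colourings by automorphisms of $H$ is also sound, so the proposal stands as a complete, self-contained proof of the cited result.
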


From Theorem~\ref{main} we have the following result.

\begin{corollary}
For all non-trivial, connected graphs $H,$ $M(H)\overset{D}{\rightarrow }H\vee K_{2}.$ In particular,
$\chi_D(M(H))\le \chi(M(H)) +2.$
\end{corollary}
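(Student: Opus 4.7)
The plan is to apply Theorem~\ref{main} with $G=M(H)$, which requires verifying that $M(H)$ admits a homomorphism to $H$ and that $M(H)$ satisfies c.e.c. The first is immediate from the definition, since $M(H)$ is uniquely $H$-colourable. We will also use that $\chi(M(H))=\chi(H)$, which follows from the upper bound $M(H)\to H$ combined with the fact that (M1) embeds every finite $H$-colourable graph, in particular one of chromatic number $\chi(H)$, into $M(H)$.

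The substance is the c.e.c.\ verification. Fix non-joined $u,v\in V(M(H))$ and a finite $T\subseteq V(M(H))$ with $u,v\notin T$. By (M2), extend the induced subgraph on $X=\{u,v\}\cup T$ to a finite, uniquely $H$-colourable, induced subgraph $X'$ of $M(H)$, with unique $H$-colouring $c$. Since $H$ is non-trivial and connected, for some $k\geq 1$ there is a walk in $H$ of length $k+1$ from $c(u)$ to $c(v)$ (a closed walk based at $c(u)$ of length at least $2$ when $u=v$). Form $Y$ by adjoining new vertices $w_{1},\ldots,w_{k}$ to $X'$ as a path $P=uw_{1}\cdots w_{k}v$ with no other new edges, and extend $c$ along this walk to obtain a homomorphism $g:Y\to H$. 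By Theorem~\ref{fixa}, the fixation $Y(g)$ is uniquely $H$-colourable, so property (M3) yields an isomorphic copy of $Y(g)$ inside $M(H)$ whose restriction to $X'$ is the identity. The embedded internal vertices of $P$ are new (hence outside $T$) and carry no edges to $T\subseteq X'$ in $Y(g)$, which is exactly the c.e.c.\ conclusion.

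With the hypotheses of Theorem~\ref{main} established, we obtain continuum-many distinguishing homomorphisms $f:M(H)\to H\vee K_{2}$, and in particular $M(H)\overset{D}{\to}H\vee K_{2}$. For the chromatic bound, compose a chosen $f$ with a proper $(\chi(H)+2)$-colouring of $H\vee K_{2}$: the two $K_{2}$-vertices $a_{1},a_{2}$ are each joined to every other vertex, so they necessarily form singleton colour classes, and the fibres $f^{-1}(a_{1})$ and $f^{-1}(a_{2})$ are preserved unchanged in the composed colouring. The principal obstacle I anticipate is the final step---verifying that merging the fibres above $V(H)$ according to a proper $\chi(H)$-colouring of $H$ still produces a distinguishing partition of $V(M(H))$, since in general distinguishing homomorphisms do not compose by Lemma~\ref{lemma}(2). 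I expect this to follow from the unique $H$-colourability of $M(H)$ together with the structure of $f$ produced in Theorem~\ref{main}, ruling out any new automorphisms that respect the coarsening but not the original fibres, which then gives $\chi_{D}(M(H))\leq\chi(M(H))+2$.
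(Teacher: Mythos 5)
Your verification that $M(H)$ is c.e.c.\ is correct and is essentially the paper's own argument: extend $\{u,v\}\cup T$ to a finite uniquely $H$-colourable induced subgraph $X'$ by (M2), attach a path realizing a walk of length at least $2$ in $H$ between the colours of $u$ and $v$ (the paper splits this into the cases $f(u)=f(v)$ and $f(u)\neq f(v)$; your walk formulation handles both at once), pass to the fixation to regain unique $H$-colourability via Theorem~\ref{fixa}, and amalgamate over $X'$ by (M3). Combined with $M(H)\rightarrow H$, this yields $M(H)\overset{D}{\rightarrow}H\vee K_{2}$ exactly as in the paper.

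The gap is the second assertion, $\chi_D(M(H))\le \chi(M(H))+2$, which you explicitly leave unresolved. Your plan is to compose a distinguishing homomorphism $f:M(H)\rightarrow H\vee K_{2}$ with a proper $(\chi(H)+2)$-colouring of $H\vee K_{2}$, and you rightly observe that Lemma~\ref{lemma}(2) blocks any general composition argument; but the resolution you gesture at (unique $H$-colourability of $M(H)$) does not by itself exclude an automorphism preserving the coarsened colour classes, and no actual argument is given. The intended derivation avoids composition entirely: the c.e.c.\ property you verified is independent of the target graph, and $\chi(M(H))$ is finite since $M(H)\rightarrow H$, so $M(H)\rightarrow K_{\chi(M(H))}$; applying Theorem~\ref{main} with $K_{\chi(M(H))}$ in place of $H$ gives a distinguishing homomorphism $M(H)\rightarrow K_{\chi(M(H))}\vee K_{2}\cong K_{\chi(M(H))+2}$, which is precisely a distinguishing proper $(\chi(M(H))+2)$-colouring. (Your composition route can in fact be salvaged, but only by opening up the proof of Theorem~\ref{main}: there, any automorphism preserving the two singleton fibres over the $K_{2}$ already fixes the rigid set $B$ pointwise and hence fixes $A$ as well, so the fibres over $V(H)$ play no role; this information is not available from the statement of Theorem~\ref{main} alone, which is why your step, as written, remains a genuine gap.)
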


\begin{proof}
As $M(H)\rightarrow H,$ it is sufficient to show that $M(H)$ is c.e.c. Fix
non-joined vertices $u$ and $v$ and a finite set of vertices $T$ in $M(H)$
not containing $u$ or $v.$ Let $X$ be the subgraph of $M(H)$ induced by $\{u,v\}\cup T;
$ by (M2), there is a finite uniquely $H$-colourable graph $X^{\prime }$ in $%
M(H)\ $containing $X.$ Fix a homomorphism $f:X^{\prime }\rightarrow H.$

Suppose that $f(u)=f(v).$ As $H$ is connected and non-trivial, there is a
vertex $i$ of $H$ joined to $f(u).$ We then add a new vertex $z$ to $%
X^{\prime }$ joined to $u$ and $v,$ to form the path $Q.$  The resulting
graph $X^{\prime \prime }$ is $H$-colourable by mapping $X^{\prime }$ via $f$
and sending $z$ to $i.$

If $f(u)\neq f(v),$ then fix a path $Q^{\prime }$ connecting $f(u)$ and $f(v)
$ in $H.$ We may add a path $Q$ (the same length as $Q^{\prime }$ and so
that no internal vertex is joined to a vertex of $X^{\prime })$ to $%
X^{\prime }$ connecting $f(u)$ and $f(v),$ so that each vertex of $Q$ is
mapped to the corresponding vertex of $Q'.$ Let $X^{\prime }$, along with the path $%
Q$ form the graph $X^{\prime \prime }.$

In either case, the resulting graph $X^{\prime \prime }$ contains $X^{\prime
}$ as an induced subgraph and admits a homomorphism, say $f^{\prime \prime },
$ to $H.$ Now form the fixation $X^{\prime \prime }(f^{\prime \prime })=Y.$
By Theorem~\ref{fixa}, $Y$ is uniquely $H$-colourable, and so by (M3) we may
find an induced subgraph $Y^{\prime }$ of $M(H)$ and an isomorphism $\alpha
:Y\rightarrow Y^{\prime }$ such that $\alpha \upharpoonright X^{\prime
}=1_{X^{\prime }}.$ In particular, $\alpha(Q)$ is a path connecting $u$ and $v$
whose internal vertices are disjoint from the set $T.$
\end{proof}

An open problem is whether $M(G)\overset{D}{\rightarrow }H\vee
K_{1}.$ In the case $M(K_2)$, which is isomorphic to the infinite random bipartite graph, this would imply that $\chi _{D}(M(K_2))=3$ (it is not $2$, since by Theorem 2.4 of \cite{cht} a connected graph $G$ with $\chi _{D}(G)=2$ has an automorphism group that has order $1$ or $2$).

\section{Proof of Theorem~\ref{main}}\label{proofs}

Consider the tree $T_{\infty}$ in Figure~\ref{rayless} formed by adding a path
of each finite length to the root vertex of infinite degree.
\begin{figure} [h]
\begin{center}
\epsfig{figure=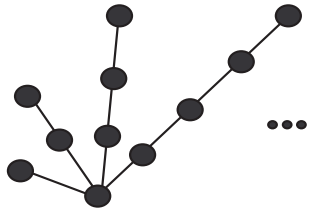, width=2in, height=1.5in} \caption{The tree $T_{\infty}.$}
\label{rayless}
\end{center}
\end{figure}
Label the \emph{branch} (that is, a path connected to the root) of this tree with length $i$ by $b_i.$ Let $\mathcal{Z}$ be the set of infinite-co-infinite subsets of the positive integers. Note that $|\mathcal{Z}|=2^{\aleph _0}.$ For $S\in \mathcal{Z}$, form the sequence $s$ listing the elements of $S$ in increasing order. Note that $s$ is unbounded. We define a tree $T_s$ to be the induced subgraph of $T_{\infty}$ by deleting each branch $b_i$ where $i$ is not listed in $s.$ Note that each $T_s$ has a trivial automorphism group.

We first prove the following lemma.

\begin{lemma}\label{lemma2}
Fix $s \in \mathcal{Z}$.
If $G$ is c.e.c., then there is a partition $A,B$ of $V(G)$ such that the subgraph induced by $B$ is isomorphic to $T_s,$ and for all distinct vertices $x$ and $y$ in $A,$ there is a $z\in B$ such that $z$ is joined to exactly one of $x$ or $y.$
\end{lemma}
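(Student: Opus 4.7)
The plan is to build $B$ by a stagewise induction, interleaving the construction of the branches of $T_s$ with the resolution of pairs from a fixed enumeration $(x_k,y_k)_{k\ge 1}$ of the unordered pairs of distinct vertices of $V(G)$. I would pick an arbitrary $r\in V(G)$ to play the role of the root of $T_s$, set $B_0=\{r\}$, and maintain the invariant that $G[B_n]$ is an induced subtree of $G$ isomorphic to a finite subtree of $T_s$ consisting of the root together with complete branches whose lengths lie in $S$.

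The atomic move is a \emph{fresh extension} at some $w\in B_{n-1}$: apply c.e.c.\ with $u=v=w$ and $T=B_{n-1}\setminus\{w\}$; this yields a cycle through $w$ whose other vertices lie outside $B_{n-1}$ and have no edges to $B_{n-1}\setminus\{w\}$. Any neighbour of $w$ on this cycle may be appended as the next vertex of the branch ending at $w$, or, when $w=r$, used to start a new branch, without introducing any chord inside $G[B]$. Iterating this $s_i$ times from $r$ produces a complete branch of length $s_i$.

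I would split the stages into two kinds. At even stage $2k$, use fresh extensions to build the branch of the smallest index $s_i\in S$ not yet realized in $B$; this index-sweeping eventually covers every element of $S$. At odd stage $2k-1$, try to distinguish $(x_k,y_k)$: if one of the two already lies in $B_{n-1}$, or if some vertex of $B_{n-1}$ is adjacent to exactly one of them, do nothing; otherwise, after possibly relabelling, I may assume $r$ is not adjacent to $y_k$ (since if $r$ is adjacent to exactly one of $x_k,y_k$ then $r$ itself already distinguishes). Apply c.e.c.\ to $u=r$, $v=y_k$, $T=(B_{n-1}\setminus\{r\})\cup\{x_k\}$, and pass to an induced shortest subpath of the resulting walk to obtain an induced path $r=z_0,z_1,\ldots,z_\ell=y_k$ with $\ell\ge 2$ whose internals are fresh and send no edge into $T$. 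Then $z_{\ell-1}$ is adjacent to $y_k$ but not to $x_k$, so it is the required witness. Since $S$ is unbounded, pick the smallest unused index $i$ with $s_i\ge\ell-1$, install $z_1,\ldots,z_{\ell-1}$ as the first $\ell-1$ vertices of the branch $b_{s_i}$, and complete it with $s_i-(\ell-1)$ further fresh extensions from $z_{\ell-1}$. In the residual case $r\in N(x_k)\cap N(y_k)$, one preliminary fresh extension at $r$ with $T=(B_{n-1}\setminus\{r\})\cup\{x_k,y_k\}$ produces a buffer $z_1\notin N(x_k)\cup N(y_k)$, and the same argument is then run from $z_1$ in place of $r$.

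I expect the main obstacle to be reconciling the rigid list of prescribed lengths $s_1<s_2<\cdots$ with the a priori uncontrolled length $\ell$ returned by c.e.c. Its resolution is exactly the unboundedness of $S$: the odd stages can always find an unused $s_i$ large enough to absorb the current path, while the even, index-sweeping stages ensure that every $s_i$ is realized in the end. After $\omega$ stages, $G[B]$ consists of $r$ together with exactly one branch of each length $s_i$, hence is isomorphic to $T_s$; and for every pair $(x,y)\in A^2$, its odd stage either supplied a distinguishing witness $z_{\ell-1}\in B$ or verified that one was already present, which is the required property of the partition.
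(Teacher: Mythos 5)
Your construction is correct and follows essentially the same route as the paper: an $\omega$-stage interleaving of c.e.c.-built fresh branches with c.e.c.\ paths that supply, for each enumerated pair, a witness vertex which is then absorbed into a branch of sufficiently large prescribed length (using the unboundedness of $s$). The only substantive difference is bookkeeping: the paper explicitly protects the vertices of already-witnessed (``processed good'') pairs from ever entering $B$, whereas you let such vertices be swallowed by later branches and note that the pair then no longer needs a witness --- a harmless simplification, since witnesses already placed in $B$ persist and the required property only concerns pairs that remain in $A=V(G)\setminus B$.
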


\begin{proof} Let $G^{[2]}$ be the set of all unordered pairs of vertices from $G$,  We will define sets of vertices $B_t$ such that $B_t \subseteq B_{t+1}$ for all $t\ge 1.$ Each pair in $G^{[2]}$ will be exactly one of \emph{processed} or \emph{unprocessed}, and exactly one of \emph{good} or \emph{bad}. We proceed over an infinite sequence of time-steps to process pairs. In a given time-step $t$, let $\mathrm{PROC}(t)$ be the set of processed pairs, and $\mathrm{GOOD}(t)$ be the set of good pairs. We set $\mathrm{GOOD}(0)=G^{[2]}$, and let $\mathrm{PROC}(0)$ and $B_0$ be empty. Order the pairs in $G^{[2]}$ as $(\{x_i,y_i\}:i\in \mathbb{N}^+)$. The idea of the proof is to process all pairs so that vertices in the processed good pairs form the set $A,$ and the vertices of $B$ are chosen from vertices in bad pairs. Further, we ensure that for processed good pairs $\{ x,y\}$ there is a $z\in B$ such that $z$ is joined to exactly one of $x$ or $y.$ The subgraph induced by $B$ will be isomorphic to $T_s.$

By the c.e.c.\ property with $u=v=x_1$ and $T=\{y_1\}$, there is a vertex $z_1$ joined to $x_1$ and neither joined nor equal to $y_1.$ Let $B_1 =\{z_1\}.$ The vertex $z_1$ will play the role of the root in $T_s$. The pair $\{ x_1,y_1\}$ is now processed. A pair in $G^{[2]}$ containing $z_1$ is bad and processed; all remaining pairs form $\mathrm{GOOD}(1)$. Let $\mathrm{PROC}(1)$ be the set of processed pairs so far, and note that $\mathrm{PROC}(1)\cap \mathrm{GOOD}(1)$ contains the single element $\{ x_1,y_1\}$.

For some $t\ge 0$ assume that $\mathrm{GOOD}(t)$, $\mathrm{PROC}(t)$ and $B_t$ are defined with the following properties.
\begin{enumerate}
\item $\{ \{x_i,y_i\}:1\le i\le t\} \subseteq \mathrm{PROC}(t)$, and $\mathrm{PROC}(t)\cap \mathrm{GOOD}(t) \subseteq \{ \{x_i,y_i\}:1\le i\le t\}$.
\item If $\{x_i,y_i\}\in \mathrm{PROC}(t)\cap \mathrm{GOOD}(t),$ then there is a $z \in B_t$ joined to exactly one of $x_i$ or $y_i.$
\item The subgraph induced by $B_t$ is finite, and contains the first $t$ branches of $T_s$ (and possibly other branches).
\item A pair containing a vertex in $B_t$ is bad; all other pairs are in $\mathrm{GOOD}(t).$
\item Vertices in $B_t$ are not equal to any vertex in a pair in $\mathrm{PROC}(t)\cap \mathrm{GOOD}(t)$.
\end{enumerate}

We now let $\{x_i,y_i\}$ be the first good pair in $G^{[2]}\setminus \mathrm{PROC}(t)$. Note that $i\ge t+1$ by property (1), and such a pair exists by (3) and (4). We will add to $B_t$ the shortest branch of $T_s$
that does not already appear there; without loss of generality, say it is branch $b_k$, with $k\ge t+1$ by (3). To accomplish this, let $T'$ be the vertices in a pair in $\mathrm{PROC}(t)\cap \mathrm{GOOD}(t)$, along with vertices in $B_t\cup\{x_{i},y_{i}\}$ (note that by (1) and (3), $T'$ is finite). By the c.e.c.\ property applied as when $t=1$, there is a vertex $z^{1}$ joined to $z_1,$ and not joined and not equal to any vertex in $T'$. Iterate this process so there is an induced path $P^k=z^1z^2\cdots z^{k}$ joined to $z_1$, and so vertices of the path are not joined nor equal to a vertex in $T'$. Note that we have now added a new branch of length $k$ in $T_s$ to $B_t$, and vertices in this branch are not joined to any other vertex at time $t$ except $z_1.$ We refer to this construction for brevity as \emph{adding a branch of length $k$ to} $z_1$ (observe that $k$ was arbitrary, so we could add any length branch).

We next process $\{x_i,y_i\}.$ Let $T''$ be the vertices in $P^k$ union $T'$. By the c.e.c.\ property, there is a vertex $z_1'$ joined to $z_1$ and to no vertex in $T''.$ In particular, $z_1'$ is not joined to $x_i$. Let $T^{(3)}$ be $T''$ minus the vertices in $\mathrm{PROC}(t)\cap \mathrm{GOOD}(t)$ equalling one of $x_i$  (which may happen since $\mathrm{PROC}(t)\cap \mathrm{GOOD}(t)$ contains unordered pairs). Let $T=T^{(3)} \cup \{y_i\}.$ By the c.e.c.\ property, there is a path $P$ joining $z_1'$ to $x_i$, whose internal vertices are not joined nor equal to a vertex in $T.$ Note that the vertex $x_i$ is joined to a vertex $z$ in $P$ with $z$ not joined nor equal to $y_i.$

Observe that the path $P'=z_1z_1'P$ may not have the length of a branch in $T_s$, or it may be the length of a branch already added. However, we can add a branch of appropriate length at $z$ to lengthen $P'$ to a path $Q$ which is a branch in $T_s$, so that the branch has length different than $k$ and has length different than any branch in $B_t$. Let $B_{t+1}$ be $B_t$ along with vertices of $P^k$ union $Q.$ Any pair in $G^{[2]}$ containing a vertex from $P^k$ or $Q$ becomes bad and processed; let all remaining pairs form $\mathrm{GOOD}(t+1)$. Note that none of the good pairs in $\mathrm{PROC}(t)\cap\mathrm{GOOD}(t)$ become bad; further, $\{x_i,y_i\}$ remains good. We change the status of $\{x_i,y_i\}$ to processed, and add all newly processed pairs to $\mathrm{PROC}(t)$ to form $\mathrm{PROC}(t+1)$. Note that $\mathrm{PROC}(t+1)$, $\mathrm{GOOD}(t+1)$, and $B_{t+1}$ satisfy items (1)-(5).

As $t$ tends to infinity, every pair becomes processed and exactly one of good or bad. Now let $A$ to be vertices which are in some good pair.
Define $B$ to be the union of all the sets $B_t.$ Then $A$ and $B$ partition $V(G)$, the subgraph induced by $B$ is isomomorphic to $T_s$, and for all distinct vertices $x$ and $y$ in $A,$ there is a $z\in B$ such that $z$ is joined to exactly one of $x$ or $y.$\end{proof}

With Lemma~\ref{lemma2} we may now complete the proof of Theorem~\ref{main}.

\begin{proof}[Proof of Theorem~\ref{main}]
Fix $s \in \mathcal{Z}$, and consider a partition $A$ and $B$ of $V(G)$ as in Lemma~\ref{lemma2} so that the subgraph induced by $B$ is isomorphic
to $T_s$. As $|\mathcal{Z}|=2^{\aleph_0},$ it is sufficient to find distinguishing homomorphisms $g_s$ from $G$ to $H\vee K_{2}$ such that $s \neq s'$ implies $g_s \neq g_{s'}.$ We can accomplish the latter assertion by ensuring that $g_s$ maps $A$ maps to $H$ and $B$ maps to $K_2$ (observe that the preimage of $K_2$ induces a subgraph isomorphic to $T_s$).

Fix $f:G\rightarrow H$ a homomorphism, and label the vertices of $K_{2}$
(that is, the $K_{2}$ outside $H)$ by $1$ and $2.$ Let $f_A$ be the restriction of $f$ on $A.$ Define a homomorphism $%
f_B:B\rightarrow K_{2}$ such each odd distance vertex from the
root of $B$ is labelled $2,$ and the remaining vertices are labelled $1.$
Define
\begin{equation*}
g_s=f_A\cup f_B:G\rightarrow H\vee K_{2}
\end{equation*}
and note that this mapping is a homomorphism. Suppose that some automorphism
of $G,$ say $\alpha $, is preserving relative to $g_s.$ It is easy to see that
$g_s \upharpoonright B$ is the identity on $B$. Suppose that for some distinct
vertices $x$ and $y$ in $A,$ $g_s(x)=y.$ By the properties of $A$ and $B$,
there is a vertex $z$ in $B$ joined to $x$ (say) and not $y.$ But
this contradicts the fact that $g_s$ fixes $z$.
\end{proof}

\end{document}